\documentclass[11pt,a4paper,reqno]{amsart}
\usepackage[T1]{fontenc}
\usepackage{lmodern}
\usepackage[margin=29mm]{geometry}
\usepackage{amsmath,amssymb,amsthm,mathtools}
\usepackage{microtype}
\usepackage{booktabs}
\usepackage{graphicx}
\usepackage[hidelinks]{hyperref}

\newtheorem{theorem}{Theorem}[section]
\newtheorem{proposition}[theorem]{Proposition}
\newtheorem{lemma}[theorem]{Lemma}

\newtheorem{remark}[theorem]{Remark}
\newtheorem{example}[theorem]{Example}
\newcommand{\Z}{\mathbb{Z}}
\newcommand{\U}{\mathcal{U}}

\newcommand{\Mord}{\mathcal{A}}
\newcommand{\Hit}{\mathsf{G}}

\title[Erd\H{o}s--Straus: inversion of Mordell classes]{Erd\H{o}s--Straus Conjecture: Modular Inversion in Mordell Classes}
\author{Andr\'es Ventas}
\date{29 October 2026}
\subjclass[2020]{11D68, 11N13, 11Y16}
\keywords{Erd\H{o}s--Straus conjecture, Mordell classes, ceiling continued fractions, modular inversion, prime number theorem in arithmetic progressions}

\begin{document}
\begin{abstract}
We examine modular inversion of strict Grid certificates in a ceiling
continued-fraction construction for the Erd\H{o}s--Straus equation.
The six Mordell classes modulo $840$ form four inversion orbits.
Prime-input pattern frequencies in the pairs
$121\leftrightarrow361$ and $289\leftrightarrow529$
have equal asymptotic limits for every fixed cutoff.
\end{abstract}
\maketitle

\section{Introduction}

The Erd\H{o}s--Straus conjecture asks whether every integer $n\ge2$
admits positive integers $x,y,z$ satisfying
\begin{equation*}
 \frac{4}{n}=\frac{1}{x}+\frac{1}{y}+\frac{1}{z}.
\end{equation*}
A solution for each prime input suffices for the general conjecture.
The six classical Mordell classes \cite{Mordell} are
\begin{equation*}
 \Mord=\{1,121,169,289,361,529\}\pmod{840}.
\end{equation*}

The ceiling continued-fraction approach to this equation is described by Ventas
in~\cite{VentasFCT}.  We study here the strict Grid subfamily, for
which inversion has an explicit divisor-complement form.
The main statements concern finite certificates, their modular
symmetry, and the limiting behavior of their frequencies on primes.

The six Mordell classes modulo $840$ form four inversion orbits.
Inversion preserves every Grid-hit indicator below a fixed cutoff,
including the first-hit index and the full finite hit pattern.
We prove that the inverse representative of a Mordell prime, taken at
its maximum Grid cutoff, cannot be a perfect square.
For each fixed cutoff, the prime number theorem in arithmetic progressions
transfers the modular symmetry to equality of limiting prime-pattern
frequencies in inverse classes.
Finite CRT counts and prime data illustrate the result.
The identities do not establish universal Grid coverage or a reduction
of that question to four ordinary prime classes.

\section{Strict Grid certificates}

For $n\equiv1\pmod4$ and $t\ge1$, write $m=4t$ and let
$r$ be the least nonnegative residue of $-n\pmod m$.
Define the strict Grid indicator by
\begin{equation}\label{eq:grid}
 \Hit_t(n)=1
 \quad\Longleftrightarrow\quad
 1<r<m,\quad r\mid m+1.
\end{equation}
The definition uses $n/m$ \emph{without reduction}.
Since $n\equiv1\pmod4$, every admissible remainder satisfies
$r\equiv3\pmod4$.

Set $i=(n+r)/m$ and $d=(m+1)/r$.  The corresponding ceiling
continued fraction and Erd\H{o}s--Straus certificate are
\begin{equation}\label{eq:certificate}
 \begin{aligned}
  \frac{n}{m}
   &=\left\lceil i,d,r\right\rceil
    =i-\frac{1}{d-\frac{1}{r}},\\
  k&=\frac{n+i}{r}\in\Z_{>0},\\
  \frac{4}{n}
   &=\frac{1}{ti}+\frac{1}{tik}+\frac{1}{ntk}.
 \end{aligned}
\end{equation}
The divisibility $r\mid n+i$ follows from $dr=m+1$ and
$n=mi-r$.  The unit-fraction identity is verified by
\begin{equation*}
 \begin{aligned}
  r&=4ti-n,\\
  n(k+1)+i
    &=\frac{n(n+i)}{r}+n+i
     =\frac{(n+r)(n+i)}{r}=4tik,\\
  \frac{1}{ti}+\frac{1}{tik}+\frac{1}{ntk}
    &=\frac{n(k+1)+i}{ntik}=\frac{4}{n}.
 \end{aligned}
\end{equation*}
The brackets $\lceil\cdots\rceil$ denote a ceiling continued fraction,
not the ceiling of a real number.

\begin{lemma}[Maximum index of a strict Grid hit]\label{lem:cutoff}
For every integer $n>1$ with $n\equiv1\pmod4$,
\begin{equation}\label{eq:maximum}
 \Hit_t(n)=1\quad\Longrightarrow\quad t\le\frac{n-1}{2}.
\end{equation}
\end{lemma}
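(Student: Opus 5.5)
The plan is a direct size argument. It combines the constraint $r\mid m+1$ with the relation $n=mi-r$ from \eqref{eq:certificate}, splitting according to whether $m$ exceeds $n$. Throughout, $m=4t$, $n\equiv1\pmod4$ and $n>1$, so $n\ge5$.

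First I bound $r$ in terms of $m$. Since $\Hit_t(n)=1$, we have $1<r<m$ and $dr=m+1$ with $d=(m+1)/r$ an integer. Because $r<m<m+1$, we get $d\ge2$. Moreover $m+1$ is odd, so $d$ is odd, forcing $d\ge3$. Hence
\begin{equation*}
 r\le\frac{m+1}{3}.
\end{equation*}

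Next I split into cases. If $m<n$, then $4t<n$ gives $t<n/4\le(n-1)/2$ for $n\ge2$, and we are done. The case $m=n$ is impossible since $m$ is even and $n$ is odd. In the remaining case $m>n$, the least nonnegative residue of $-n$ modulo $m$ is exactly $r=m-n$, so $m=n+r$ (equivalently $i=1$). Substituting the bound on $r$ gives
\begin{equation*}
 m-n\le\frac{m+1}{3},
\end{equation*}
so $2m\le 3n+1$, that is, $8t\le 3n+1$. It then remains to check $(3n+1)/8\le(n-1)/2$. This is equivalent to $3n+1\le 4n-4$, i.e.\ $n\ge5$, which holds.

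The only step needing care is the parity observation that $d$ is odd. It upgrades the trivial bound $d\ge2$ to $d\ge3$. With only $d\ge2$ one would get $m\le 2n+1$, i.e.\ $t\le(2n+1)/4$, which does not quite reach $(n-1)/2$. Everything else is elementary bookkeeping with the residue $r$.
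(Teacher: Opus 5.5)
Your proof is correct, but the key inequality is different from the paper's. The paper argues by contradiction. If $t>(n-1)/2$, then, because $t$ is an integer and $n$ is odd, $t\ge(n+1)/2$. Hence $m\ge 2n+2$ and $r=m-n\ge n+2$. But $r\mid m+1$ gives $r\mid(m+1)-r=n+1$, which is impossible.

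That is exactly your ``$d\ge2$'' bound. Indeed $(d-1)r=n+1$, so $r\le n+1$ and $m\le 2n+1$. The paper closes the remaining gap by integrality, not parity. Since $2n+1\equiv3\pmod4$, the largest multiple of $4$ not exceeding $2n+1$ is $2n-2$, so $m=4t\le 2n+1$ already forces $t\le(n-1)/2$. Your closing remark that $d\ge2$ alone ``does not quite reach'' $(n-1)/2$ is therefore mistaken. The oddness of $d$ is not needed for the lemma as stated.

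What your route buys is a sharper conclusion. From $d\ge3$ you get $t\le(3n+1)/8$ whenever $m>n$. This is attained for $n\equiv5\pmod8$; for example $n=13$, $t=5$, $r=7$, and $7\mid21$. The paper's bound $(n-1)/2$ is attained only at $n=5$.

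The paper's version is shorter and needs no separate case $m<n$, because its contradiction hypothesis already forces $m>n$.
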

\begin{proof}
Suppose $t>(n-1)/2$.  Since $n$ is odd and $t$ is an
integer, $t\ge(n+1)/2$.  In this range $m=4t>n$, so
$r=m-n$ is the least positive remainder of $-n\pmod m$.
If a strict hit existed, then
\begin{equation*}
 \begin{aligned}
  m&\ge2n+2,\qquad r=m-n\ge n+2,\\
  r\mid m+1&\quad\Longrightarrow\quad
  r\mid(m+1)-r=n+1.
 \end{aligned}
\end{equation*}
The last divisibility is impossible because $r>n+1$.
The bound allows $n<m<2n$, where valid certificates have $i=1$.
\end{proof}

\begin{lemma}[Coprimality condition]\label{lem:coprime}
If $\Hit_t(n)=1$, then $\gcd(n,4t)=1$.
\end{lemma}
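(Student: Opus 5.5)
The approach is to let a common prime divisor of $n$ and $m=4t$ propagate through the defining congruence into $m+1$, which is impossible. I will work directly from the definition \eqref{eq:grid}. Write $m=4t$. Since $r$ is the least nonnegative residue of $-n$ modulo $m$, we have $n+r\equiv0\pmod m$, that is, $n=mi-r$ with $i=(n+r)/m$ an integer, as already used in \eqref{eq:certificate}. The strict hit condition gives $r\mid m+1$. The plan is to show that any common divisor of $n$ and $m$ would divide $r$, and hence $m+1$, which forces it to be $1$.

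The steps are as follows. Let $g=\gcd(n,m)$.
\begin{enumerate}
\item Since $g\mid n$ and $g\mid m$, and $r=mi-n$, it follows that $g\mid r$.
\item From $r\mid m+1$ we get $g\mid m+1$.
\item Combining this with $g\mid m$ gives $g\mid (m+1)-m=1$, so $g=1$.
\end{enumerate}
This yields $\gcd(n,4t)=1$. A remark can be included for consistency: $n$ odd already gives coprimality with $4$, and the argument handles the odd part of $t$.

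There is no real obstacle here. The only point needing care is that $r$ is a genuine remainder of $-n$ modulo $m$, so that $r\equiv -n\pmod m$ holds exactly even though $n/m$ is not reduced. This is immediate from the definition. The condition $1<r<m$ is not needed for this argument, beyond guaranteeing that $r$ is a positive divisor of $m+1$.
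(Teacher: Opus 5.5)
Your proposal is correct and matches the paper's proof step for step: $g=\gcd(n,m)$ divides $r$ because $r\equiv -n \pmod m$, then divides $m+1$ because $r\mid m+1$, and hence divides $(m+1)-m=1$. Your observation that the bound $1<r<m$ plays no essential role is also accurate.
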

\begin{proof}
Put $m=4t$ and $g=\gcd(n,m)$.  At a strict hit,
\begin{equation*}
 \begin{aligned}
  r&\equiv-n\pmod m, & g&\mid r,\\
  r&\mid m+1,           & g&\mid m+1,\\
  g&\mid m,             & g&\mid(m+1)-m=1.
 \end{aligned}
\end{equation*}
Thus $g=1$.
\end{proof}

\section{Inversion and Mordell classes}

For an integer $T\ge2$, put
\begin{equation}\label{eq:modulus}
 M_T=\operatorname{lcm}(840,4,8,\ldots,4T).
\end{equation}
For a unit $u\pmod{M_T}$, let $\iota_T(u)$ be the unique
integer in $[1,M_T)$ satisfying $u\iota_T(u)\equiv1\pmod{M_T}$.

\begin{theorem}[Exact inversion of finite Grid patterns]\label{thm:inversion}
Let $u\equiv1\pmod4$ be a unit modulo $M_T$ and
$v=\iota_T(u)$.
Then
\begin{equation}\label{eq:hit-invariance}
 \Hit_t(u)=\Hit_t(v)\qquad(1\le t\le T).
\end{equation}
At every common hit, the remainders satisfy
\begin{equation*}
 r_u r_v=4t+1.
\end{equation*}
Hence the full finite pattern, its hit count, and its first-hit
index when at most $T$ are preserved.
\end{theorem}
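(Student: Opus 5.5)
The key observation is that $m=4t$ divides $M_T$ for every $1\le t\le T$, since $M_T$ is a multiple of $4t$ by \eqref{eq:modulus}. Hence $uv\equiv1\pmod{M_T}$ descends to $uv\equiv1\pmod m$, and $\gcd(u,m)=\gcd(v,m)=1$. Reducing modulo $4$ also gives $v\equiv1\pmod4$, because $u\equiv1\pmod4$ and $uv\equiv1\pmod4$. So $\Hit_t(v)$ is defined. Moreover $\Hit_t(n)$ depends only on $n\bmod m$: the remainder $r$ is the least nonnegative residue of $-n\pmod m$. The problem is therefore a statement about the two mutually inverse units $u,v$ in $(\Z/m\Z)^\times$.

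The main step is to show that $\Hit_t(u)=1$ implies $\Hit_t(v)=1$ with $r_ur_v=m+1$; symmetry of the roles of $u$ and $v$ then gives the equivalence \eqref{eq:hit-invariance}.

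1. Suppose $1<r_u<m$ and $r_u\mid m+1$, and put $s=(m+1)/r_u$, an integer.
2. Bound $s$: since $r_u\ge2$ we have $s\le(m+1)/2<m$. Since $r_u\le m-1$ we have $s\ge(m+1)/(m-1)>1$, so $s\ge2$. Thus $1<s<m$.
3. Compute the congruences: $r_u\equiv-u\pmod m$ and $r_us\equiv1\pmod m$, so $-us\equiv1\pmod m$. Multiplying by $v$ gives $s\equiv-v\pmod m$.
4. Since $0\le s<m$, $s$ is exactly the least nonnegative residue of $-v\pmod m$, i.e. $s=r_v$.
5. By construction $r_v\mid m+1$, so $\Hit_t(v)=1$, and $r_ur_v=m+1=4t+1$.

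This also yields the remainder identity at every common hit.

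The final sentence of the theorem follows formally. The hit sets $\{t\le T:\Hit_t(u)=1\}$ and $\{t\le T:\Hit_t(v)=1\}$ coincide, so their cardinalities agree. Their minima agree too, whenever the minimum exists, i.e. whenever the first-hit index is at most $T$.

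I expect no serious obstacle here. The one point needing care is the range check $1<s<m$, including the degenerate edge cases. These are excluded by the strict inequalities in \eqref{eq:grid}: $r_u=1$ would force $s=m+1$, which is why strictness is needed. I would also note explicitly that $v$ being a unit modulo $M_T$ (automatic as an inverse) is what makes the reduction modulo each $4t$ legitimate.
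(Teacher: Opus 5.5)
Your proposal is correct and follows the paper's proof essentially step for step. You set $s=(m+1)/r_u$, check $1<s<m$, use $r_us\equiv1\pmod m$ to get $v\equiv-s\pmod m$, and exchange $u$ and $v$ for the converse. You are also more explicit than the paper on three points it leaves implicit: that $4t\mid M_T$, that $v\equiv1\pmod4$ (so $\Hit_t(v)$ is defined), and that $s=r_v$, which is exactly what gives $r_ur_v=4t+1$.
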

\begin{proof}
Write $m=4t$, and suppose $u\equiv-r\pmod m$ with
$1<r<m$ and $r\mid m+1$.  Set $s=(m+1)/r$.
Since $m+1\equiv1\pmod4$ and $r\equiv3\pmod4$,
\begin{equation*}
 \begin{aligned}
  1<s<m,\qquad s&\equiv3\pmod4,\\
  rs&=m+1\equiv1\pmod m,\\
  v&\equiv u^{-1}\equiv(-r)^{-1}\equiv-s\pmod m.
 \end{aligned}
\end{equation*}
Thus $\Hit_t(v)=1$.  Exchange $u$ and $v$ for the converse.
\end{proof}

Inversion modulo $840$ acts on the six classes by
\begin{equation}\label{eq:orbits}
 \begin{aligned}
  1&\longleftrightarrow1,&
  169&\longleftrightarrow169,\\
  121&\longleftrightarrow361,&
  289&\longleftrightarrow529.
 \end{aligned}
\end{equation}
For every fixed $T$, inversion is an involution on the units
modulo $M_T$.  It exchanges the residue classes $121$ and $361$,
and $289$ and $529$, and maps each of the classes $1$ and $169$
to itself.  Being self-inverse modulo $840$ does not mean that every
representative is its own inverse modulo $M_T$.  The bijection
concerns invertible residues, not the sets of prime integers.

\begin{remark}[A composite inverse at the maximum prime cutoff]
\label{rem:1201}
The smallest prime in class $361\pmod{840}$ is $p=1201$.
With $T_p=600$, let
\begin{equation*}
 \begin{aligned}
  M_{600}&=\operatorname{lcm}(840,4,8,\ldots,2400),\\
  c&=\iota_{600}(1201)=\frac{1015M_{600}+1}{1201},\\
  1201c&\equiv1\pmod{M_{600}},\qquad
  c\equiv121\pmod{840}.
 \end{aligned}
\end{equation*}
The integer $c$ has $259$ decimal digits and is composite:
a modular exponentiation gives $2^{c-1}\not\equiv1\pmod c$.
Every prime factor of $c$ exceeds $600$ because
$\gcd(c,M_{600})=1$.
The first two strict hits and their complementary remainders are
\begin{equation*}
 \begin{array}{c|cc|cc}
   4t & r_p & r_c & \dfrac{4t+1}{r_p} & \dfrac{4t+1}{r_c} \\
   \hline
   68 & 23 & 3 & 3 & 23\\
   212 & 71 & 3 & 3 & 71
 \end{array}
\end{equation*}
The corresponding ceiling continued fractions, written for
$n/(4t)$ rather than its reciprocal, are
\begin{equation*}
 \begin{aligned}
  \frac{1201}{68}&=\left\lceil18,3,23\right\rceil,&
  \frac{c}{68}&=\left\lceil\frac{c+3}{68},23,3\right\rceil,\\
  \frac{1201}{212}&=\left\lceil6,3,71\right\rceil,&
  \frac{c}{212}&=\left\lceil\frac{c+3}{212},71,3\right\rceil.
 \end{aligned}
\end{equation*}
Thus a prime in class $361$ and a composite in class $121$
share both hit indices $t=17,53$, with complementary remainders.
\end{remark}

\begin{example}[Three hand-checkable inverse pairs at $T=11$]
\label{ex:small}
The modulus is $M_{11}=110880$.  The following pairs are inverse
modulo $M_{11}$; the first two lie within self-inverse classes:
\begin{equation*}
\begin{array}{c|rr|r|rr}
 a & u & \iota_{11}(u) & (u\iota_{11}(u)-1)/M_{11}
   & r_u & r_{\iota_{11}(u)}\\ \hline
 1   & 5881 & 54601  & 2896 & 15 & 3\\
 169 & 1009 & 81649  & 743  & 3  & 15\\
 361 & 4561 & 103441 & 4255 & 15 & 3
\end{array}
\end{equation*}
All three have their first strict hit at $4t=44$.
The third pair gives, without any reduction,
\begin{equation*}
\begin{aligned}
4561\cdot103441&=4255\cdot110880+1,\\
\frac{4561}{44}&=\left\lceil104,3,15\right\rceil,\\
\frac{103441}{44}&=\left\lceil2351,15,3\right\rceil,\\
15\cdot3&=45=44+1.
\end{aligned}
\end{equation*}
Here $4561$ is prime, whereas $103441=13\cdot73\cdot109$ is
composite.  The first two rows illustrate that $1$ and $169$ are
self-inverse \emph{classes}, although their displayed representatives
are distinct.
\end{example}

\section{Maximum-cutoff inverses and square exclusion}

For a prime $p$ in a Mordell class, define
\begin{equation}\label{eq:maximum-inverse}
 \begin{aligned}
  T_p&=\frac{p-1}{2},\\
  b_p&=\iota_{T_p}(p),\qquad
  pb_p\equiv1\pmod{M_{T_p}}.
 \end{aligned}
\end{equation}
The inverse exists because $p>T_p$ and $p\nmid840$.
Theorem~\ref{thm:inversion} and Lemma~\ref{lem:cutoff} give
\begin{equation}\label{eq:full-cutoff}
 \bigl(\exists t\ge1:\Hit_t(p)=1\bigr)
 \quad\Longleftrightarrow\quad
 \bigl(\exists t\le T_p:\Hit_t(b_p)=1\bigr).
\end{equation}
The cutoff is complete for $p$, not necessarily for the much larger $b_p$.

\begin{proposition}[No square inverse at the maximum prime cutoff]
\label{prop:no-squares}
Let $p\equiv1\pmod8$ be prime and
$T_p\le T<p$.  If $pb\equiv1\pmod{M_T}$, then
$b$ is not a perfect square.  In particular, no $b_p$
for a Mordell prime is a square.
\end{proposition}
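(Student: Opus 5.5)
The plan is to turn a square inverse into a statement about quadratic residues modulo $p$, and then show that statement is impossible by counting. Suppose $b=x^2$ with $pb\equiv1\pmod{M_T}$. First I note that the inverse exists: $T<p$, so $p$ divides none of $4,8,\dots,4T$; and $p\nmid 840$ because $p\equiv1\pmod8$ forces $p\ge17$. By \eqref{eq:modulus}, every prime $q\le T$ divides $M_T$, since $q\mid 4q$ and $q\le T$. Reducing $px^2\equiv1$ modulo such a $q$ shows that $x$ is a unit mod $q$ and $p\equiv(x^{-1})^2\pmod q$. Hence $p$ is a quadratic residue modulo every odd prime $q\le T$.

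Next I transfer this to residues modulo $p$.
\begin{itemize}
\item For an odd prime $q\le T$, quadratic reciprocity with $p\equiv1\pmod4$ gives $\bigl(\tfrac{q}{p}\bigr)=\bigl(\tfrac{p}{q}\bigr)=1$.
\item For $q=2$, the hypothesis $p\equiv1\pmod8$ gives $\bigl(\tfrac{2}{p}\bigr)=1$. This is where the mod-$8$ condition is used.
\end{itemize}
Every integer $a$ with $1\le a\le T$ factors into primes $\le T<p$, so multiplicativity of the Legendre symbol makes every such $a$ a quadratic residue mod $p$. Since $T\ge T_p=(p-1)/2$, this covers all of $1,2,\dots,(p-1)/2$.

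Now I count. Because $p\equiv1\pmod4$, $-1$ is a residue. Therefore $p-a\equiv -a$ is also a residue for each $1\le a\le(p-1)/2$. So all $p-1$ nonzero classes would be quadratic residues. That contradicts the fact that exactly $(p-1)/2$ of them are residues, since $p>2$. Hence $b$ is not a square.

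For the last clause, every Mordell class is $\equiv1\pmod8$ ($121,169,289,361,529$ all leave remainder $1$ mod $8$), and $T=T_p<p$. So $b_p$ from \eqref{eq:maximum-inverse} falls under the statement.

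The step needing the most care is confirming that every prime $q\le T$, including those between $p/4$ and $p/2$, really divides $M_T$; this is immediate from $q\mid 4q$. The only external input is the reciprocity step.
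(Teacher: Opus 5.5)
Your proof is correct and is essentially the paper's argument run by contradiction. The paper uses $(-1/p)=1$ to place a nonresidue $u\le T_p$ and extracts an odd prime factor $\ell\mid u$ with $(b/\ell)=-1$; you instead assume $b$ is a square, use the same ingredients ($\ell\mid M_T$, reciprocity and $(2/p)=1$) to make every $a\le T$ a residue, and then count. Your explicit check that each prime $q\le T$ divides $M_T$ via $q\mid 4q$ fills in a step the paper leaves implicit.
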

\begin{proof}
Since $(-1/p)=1$, some quadratic nonresidue modulo $p$
has a representative $u$ with $1\le u\le T_p$.
In the prime factorization of $u$, at least one prime
$\ell\le T_p$ satisfies $(\ell/p)=-1$.
As $(2/p)=1$, the prime $\ell$ is odd.
Quadratic reciprocity and $pb\equiv1\pmod\ell$ give
\begin{equation*}
 \begin{aligned}
  \left(\frac{p}{\ell}\right)
    &=\left(\frac{\ell}{p}\right)=-1,\\
  \left(\frac{b}{\ell}\right)
    &=\left(\frac{p^{-1}}{\ell}\right)=-1,
 \end{aligned}
\end{equation*}
because $\ell\mid M_T$.  A square cannot be a quadratic
nonresidue modulo $\ell$.
\end{proof}

The cutoff is essential.  For example,
\begin{equation*}
 \begin{aligned}
  M_4&=1680,\\
  1201\cdot961&\equiv1\pmod{1680},\qquad961=31^2.
 \end{aligned}
\end{equation*}
This inversion is well below $T_{1201}=600$.

Every prime factor of $b_p$ exceeds $T_p$.  Nevertheless,
these restrictions do not imply a Grid hit in the shared range.
The existence of a certificate is the unresolved step in a
universal class-reduction argument.

\section{Prime intervals and their inverse representatives}

For $a\in\Mord$, let $a^*$ denote its inverse modulo $840$.
For $T\ge2$, define the prime-input set and its inverse representatives by
\begin{equation}\label{eq:family}
 \begin{aligned}
  \mathcal P_a(T)
    &=\{p\text{ prime}:T<p\le2T,\ p\equiv a\pmod{840}\},\\
  \mathcal B_{a^*}(T)
    &=\{\iota_T(p):p\in\mathcal P_a(T)\}.
 \end{aligned}
\end{equation}
Every element of $\mathcal B_{a^*}(T)$ is in class $a^*$
modulo $840$ and may be prime or composite.

\begin{proposition}[Exact prime-interval correspondence]\label{prop:conditional}
Modular inversion induces a bijection
\begin{equation*}
 \mathcal P_a(T)\overset{\iota_T}{\longleftrightarrow}
 \mathcal B_{a^*}(T).
\end{equation*}
Every strict Grid hit of a prime $p\in\mathcal P_a(T)$ at $4t$
is also a hit of its partner $b=\iota_T(p)\in\mathcal B_{a^*}(T)$
at the same $4t$.  Conversely, every hit of $b$ with $t\le T$
is a hit of $p$ at the same index.
\end{proposition}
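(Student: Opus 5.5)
We prove the three assertions in turn, and most of the work is already done by Theorem~\ref{thm:inversion}. The only new points are that $\iota_T$ is defined on each prime of $\mathcal P_a(T)$, that it is injective there, and that each hit of $p$ lies in the range $t\le T$ covered by the theorem.

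\emph{Well-definedness and bijectivity.} Take $p\in\mathcal P_a(T)$. Every prime factor of $M_T$ divides $840$ or some $4t$ with $t\le T$, so each such prime is at most $\max(7,T)$. Since $a\in\Mord$ is a unit modulo $840$, we have $p\nmid 840$, so $p\ne 2,3,5,7$. The hypothesis $p>T$ excludes every other prime factor of $M_T$. Hence $p$ is a unit modulo $M_T$, and $\iota_T(p)$ is defined.

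Because $840\mid M_T$, reducing $p\,\iota_T(p)\equiv1$ modulo $840$ shows $\iota_T(p)\equiv a^{-1}\equiv a^*\pmod{840}$. The map $p\mapsto\iota_T(p)$ is surjective onto $\mathcal B_{a^*}(T)$ by definition.

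For injectivity, note that the primes of $\mathcal P_a(T)$ lie in $(T,2T]$. We have $M_T\ge\operatorname{lcm}(4T,840)\ge 4T>2T$, so distinct elements of $(T,2T]$ are distinct residues modulo $M_T$. Inversion is injective on units modulo $M_T$, and $\iota_T$ takes values in $[1,M_T)$, so distinct primes have distinct images. Thus $\iota_T$ is a bijection $\mathcal P_a(T)\to\mathcal B_{a^*}(T)$.

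\emph{Forward transfer of hits.} Let $p\in\mathcal P_a(T)$ have a strict hit at $4t$. Since $a\equiv1\pmod4$, the prime $p$ satisfies $p\equiv1\pmod 4$, so Lemma~\ref{lem:cutoff} applies and gives $t\le (p-1)/2\le (2T-1)/2<T$. Every hit index of $p$ therefore lies in $[1,T]$. Theorem~\ref{thm:inversion}, applied with $u=p$ and $v=\iota_T(p)=b$, then gives $\Hit_t(b)=\Hit_t(p)=1$ at the same $4t$, with $r_pr_b=4t+1$.

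\emph{Converse.} If $b$ has a hit at some $t\le T$, the equivalence in Theorem~\ref{thm:inversion} holds for every $t\le T$, so $\Hit_t(p)=1$.

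\emph{Main obstacle.} The step needing most care is the forward direction. Without Lemma~\ref{lem:cutoff}, a hit of $p$ at some $t>T$ would fall outside the scope of Theorem~\ref{thm:inversion}. The interval constraint $p\le 2T$ is exactly what forces $T_p<T$, so that every hit of $p$ is inside the window. A secondary subtlety is injectivity: it relies on $M_T>2T$, so that distinct primes in the interval are distinct residues; this is immediate once it is made explicit.
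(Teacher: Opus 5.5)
Your proof is correct and follows the paper's argument. You show $p$ is a unit modulo $M_T$ because $p>T$ and $p\nmid 840$. You get bijectivity from injectivity of inversion. You get the hit correspondence from Lemma~\ref{lem:cutoff}, which gives $t\le(p-1)/2<T$, combined with Theorem~\ref{thm:inversion}.

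You are more explicit than the paper on two points it leaves implicit. First, distinct primes in $(T,2T]$ are distinct residues modulo $M_T$ because $M_T>2T$. Second, $p\equiv1\pmod4$, so both the lemma and the theorem apply.
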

\begin{proof}
Every prime $p\in\mathcal P_a(T)$ is a unit modulo $M_T$:
$p>T$ and $p\nmid840$.  Since inversion is involutive,
it is injective and maps $\mathcal P_a(T)$ onto
$\mathcal B_{a^*}(T)$ by definition.
All possible Grid hits of $p\le2T$ satisfy
$t\le(p-1)/2<T$; the certificate correspondence follows
from Theorem~\ref{thm:inversion}.
\end{proof}

Taking the dyadic cuts $T=2^k$, $k\ge1$, covers every
prime $p>2$ in one of the intervals $(T,2T]$.
Therefore, coverage of all primes in class $a$ is equivalent
to a hit at some $t\le T$ for every $b\in\mathcal B_{a^*}(T)$
and every such dyadic $T$.
This is an equivalence of conditions, not an existence theorem.
The criterion requires control of specified \emph{integer}
representatives, which need not be prime.
It does not reduce universal coverage to four classes of prime inputs.

\section{Asymptotic symmetry of prime Grid patterns}

For a fixed $T$, set
\begin{equation*}
 \begin{aligned}
  \boldsymbol\Hit_T(n)&=(\Hit_1(n),\ldots,\Hit_T(n)),\\
  \U_a(T)&=\{u\in(\Z/M_T\Z)^\times:u\equiv a\pmod{840}\},\\
  \U_{a,\varepsilon}(T)&=\{u\in\U_a(T):
                 \boldsymbol\Hit_T(u)=\varepsilon\},
 \end{aligned}
\end{equation*}
where $\varepsilon\in\{0,1\}^T$.
Let $P_{a,\varepsilon}(X,T)$ count primes
$X\le p<2X$ in class $a$ with pattern $\varepsilon$,
and let $P_a(X)$ count all primes in that class.
Set
\begin{equation*}
 \begin{aligned}
  R_a(X,T)&=\frac{P_{a,\boldsymbol 0}(X,T)}{P_a(X)},\\
  \nu_a(T)&=\frac{|\U_{a,\boldsymbol 0}(T)|}{|\U_a(T)|},
  \qquad \boldsymbol 0=(0,\ldots,0).
 \end{aligned}
\end{equation*}
These are, respectively, the observed prime non-coverage proportion
and the exact proportion among invertible residues at cutoff $T$.

\begin{theorem}[Equality of limiting prime-pattern laws]
\label{thm:asymptotic}
For $a\in\Mord$, $a^*\equiv a^{-1}\pmod{840}$,
and every fixed $T\ge2$ and $\varepsilon\in\{0,1\}^T$,
\begin{equation}\label{eq:limiting-law}
 \begin{aligned}
  \lim_{X\to\infty}\frac{P_{a,\varepsilon}(X,T)}{P_a(X)}
  &=\frac{|\U_{a,\varepsilon}(T)|}{|\U_a(T)|}\\
  &=\frac{|\U_{a^*,\varepsilon}(T)|}{|\U_{a^*}(T)|}\\
  &=\lim_{X\to\infty}
     \frac{P_{a^*,\varepsilon}(X,T)}{P_{a^*}(X)}.
 \end{aligned}
\end{equation}
\end{theorem}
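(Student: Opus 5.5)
The plan has three steps. First, show that for fixed $T$ the pattern $\boldsymbol\Hit_T(n)$ depends only on $n \bmod M_T$. Then apply the prime number theorem in arithmetic progressions to get the two outer equalities in \eqref{eq:limiting-law}. Finally, use Theorem~\ref{thm:inversion} to get the middle equality as a finite counting identity.

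\emph{Periodicity.} For $t\le T$ we have $4t\mid M_T$ by \eqref{eq:modulus}. Hence the least nonnegative residue $r$ of $-n \pmod{4t}$ depends only on $n \bmod M_T$, and so does the condition \eqref{eq:grid}. The map $u\mapsto\boldsymbol\Hit_T(u)$ is therefore well defined on $\Z/M_T\Z$. Since $840\mid M_T$, the class $a\pmod{840}$ lifts to the residues of $\U_a(T)$, and every $u\in\U_a(T)$ satisfies $u\equiv1\pmod4$, because all Mordell classes are $\equiv1\pmod8$. Every prime $p>M_T$ is a unit modulo $M_T$. Primes $p\le M_T$ are finitely many for fixed $T$, so they do not affect the limit. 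Thus, for large $X$,
\begin{equation*}
 P_{a,\varepsilon}(X,T)=\sum_{u\in\U_{a,\varepsilon}(T)}\#\{X\le p<2X:\ p\equiv u\pmod{M_T}\}.
\end{equation*}

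\emph{Outer equalities.} The prime number theorem in arithmetic progressions for the fixed modulus $M_T$ gives
\begin{equation*}
 \#\{X\le p<2X:\ p\equiv u\pmod{M_T}\}\sim \frac{X}{\varphi(M_T)\log X}
\end{equation*}
for every unit $u$. The ratio of the counts in $[X,2X)$ is handled by writing the count as $\pi(2X;M_T,u)-\pi(X;M_T,u)$, whose main term is $(2X/\log 2X - X/\log X)/\varphi(M_T)\sim X/(\varphi(M_T)\log X)$. Summing over $u\in\U_{a,\varepsilon}(T)$, and separately over $u\in\U_a(T)$ to get $P_a(X)$, and dividing yields the first equality. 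The fixed number of classes makes the asymptotics uniform. The same argument with $a^*$ gives the last equality. One point needs care: $P_a(X)$ is defined via the class modulo $840$, while the decomposition uses classes modulo $M_T$. Since $\U_a(T)$ is exactly the set of unit lifts of $a$, the two counts agree up to finitely many small primes.

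\emph{Middle equality.} This is the only step that uses the Grid structure. Inversion $\iota_T$ is an involution on $(\Z/M_T\Z)^\times$. It maps $\U_a(T)$ bijectively onto $\U_{a^*}(T)$, because reduction modulo $840$ is a group homomorphism. By Theorem~\ref{thm:inversion}, it preserves $\boldsymbol\Hit_T$ coordinatewise. Hence $\iota_T$ restricts to a bijection $\U_{a,\varepsilon}(T)\to\U_{a^*,\varepsilon}(T)$, and the cardinality ratios coincide. Theorem~\ref{thm:inversion} is stated for integer representatives in $[1,M_T)$, but by periodicity this is the same as a statement about residues.

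The main obstacle is only bookkeeping, since everything else is finite or standard. The one point to get right is matching the interval $[X,2X)$ with the prime number theorem in arithmetic progressions, and confirming that $\U_a(T)$ is nonempty so the ratios are defined. Nonemptiness holds because $a$ is a unit modulo $840$ and every unit modulo $840$ lifts to a unit modulo $M_T$.
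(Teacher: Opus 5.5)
Your proposal is correct and follows the paper's route. The inversion bijection of Theorem~\ref{thm:inversion} gives $|\U_{a,\varepsilon}(T)|=|\U_{a^*,\varepsilon}(T)|$ and $|\U_a(T)|=|\U_{a^*}(T)|$, and the prime number theorem in arithmetic progressions modulo the fixed $M_T$ turns these counts into the limiting frequencies; you additionally make explicit, and handle correctly, the bookkeeping the paper leaves implicit (periodicity of $\boldsymbol\Hit_T$ modulo $M_T$, discarding primes $p\le M_T$, the dyadic window $[X,2X)$, and nonemptiness of $\U_a(T)$).
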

\begin{proof}
Inversion preserves each finite hit pattern and bijects the
corresponding invertible residue classes.  Set
\begin{equation*}
\begin{aligned}
|\U_{a,\varepsilon}(T)|&=|\U_{a^*,\varepsilon}(T)|=:h,\\
|\U_a(T)|&=|\U_{a^*}(T)|
   =\frac{\varphi(M_T)}{\varphi(840)}=:N.
\end{aligned}
\end{equation*}
For fixed $T$, the prime number theorem in arithmetic
progressions~\cite{MontgomeryVaughan} distributes primes uniformly
among the invertible residues modulo $M_T$.  Hence
\begin{equation*}
\begin{aligned}
\lim_{X\to\infty}\frac{P_{a,\varepsilon}(X,T)}{P_a(X)}
 &=\frac{h}{N}
 =\lim_{X\to\infty}\frac{P_{a^*,\varepsilon}(X,T)}{P_{a^*}(X)}.
\end{aligned}
\end{equation*}
\end{proof}

In particular,
\begin{equation}\label{eq:noncoverage}
 \lim_{X\to\infty}R_a(X,T)
 =\nu_a(T)=\nu_{a^*}(T)
 =\lim_{X\to\infty}R_{a^*}(X,T).
\end{equation}
The equality is exact for each fixed $T$; it does not assert
uniformity for cutoffs growing with $X$.

\begin{remark}[Prime statistics without prime-to-prime inversion]
\label{rem:prime-stats}
Inversion at a fixed cutoff acts on all \emph{invertible residues},
not specifically on primes.  An individual prime in class $121$
may invert to a composite in class $361$ (and conversely), as in
Remark~\ref{rem:1201}.  Nevertheless, primes become equidistributed
among the invertible residue classes of each fixed modulus.
Theorem~\ref{thm:asymptotic} compares the \emph{prime frequencies}
on the two sides through these equal modular populations;
it does not pair the individual primes. 
\end{remark}

\section{Computational measurements}

The counts use the unreduced residue test in~\eqref{eq:grid}.
For the Mordell classes, the only possible hits through $T=17$
are at $t=11$ and $t=17$.  At these indices there are two
admissible nonzero residues modulo $11$ and $17$, respectively.
CRT gives independent hit proportions $2/10$ and $2/16$, hence
\begin{equation*}
\begin{aligned}
 \nu_a(11)&=1-\frac{2}{10}=\frac{4}{5},\\
 \nu_a(17)&=\left(1-\frac{2}{10}\right)
                 \left(1-\frac{2}{16}\right)
              =\frac{7}{10},\\
 46\,080\cdot\frac{7}{10}&=32\,256.
\end{aligned}
\end{equation*}
These are exact proportions of invertible residues without any hit
\emph{through the indicated cutoff}, not claims of global non-coverage.
The numerical agreement between distinct inverse pairs at these
cutoffs is verified directly, not implied by the inversion theorem.
\begin{table}[htbp]
\centering\small
\caption{Exact CRT counts of invertible residues at fixed cutoffs.
All counts in the last three columns are \emph{per class} in the indicated pair.}
\label{tab:crt}
\begin{tabular}{@{}r l r r r r r@{}}
\toprule
$T$ & Inverse pair & $M_T$ & Units & No hit through $T$ & Hit through $T$
 & $\nu_a(T)$\\
\midrule
11 & $121\leftrightarrow361$ & $110\,880$ & $120$ & $96$ & $24$ & $0.80$\\
11 & $289\leftrightarrow529$ & $110\,880$ & $120$ & $96$ & $24$ & $0.80$\\
17 & $121\leftrightarrow361$ & $49\,008\,960$ & $46\,080$ & $32\,256$ & $13\,824$ & $0.70$\\
17 & $289\leftrightarrow529$ & $49\,008\,960$ & $46\,080$ & $32\,256$ & $13\,824$ & $0.70$\\
\bottomrule
\end{tabular}
\end{table}

For $10^7\le p<10^8$, the enumerated populations were
$26\,469$ primes in class $121$, $26\,437$ in $361$,
$26\,453$ in $289$, and $26\,463$ in $529$.
Table~\ref{tab:primes} reports observed non-coverage percentages.
\begin{table}[htbp]
\centering\small
\caption{Proportion (\%) of primes with no hit through $T$;
$10^7\le p<10^8$.}
\label{tab:primes}
\begin{tabular}{@{}r r r r r@{}}
\toprule
$T$ & $121$ & $361$ & $289$ & $529$\\
\midrule
11 & 79.91 & 79.97 & 79.72 & 79.84\\
17 & 70.04 & 69.92 & 69.70 & 69.91\\
26 & 46.86 & 47.10 & 46.88 & 47.09\\
41 & 38.97 & 39.29 & 39.26 & 39.21\\
100 & 23.00 & 23.45 & 19.98 & 20.08\\
500 & 4.76 & 4.86 & 3.28 & 3.33\\
\bottomrule
\end{tabular}
\end{table}

\begin{figure}[htbp]
 \centering
\includegraphics[width=.95\linewidth]{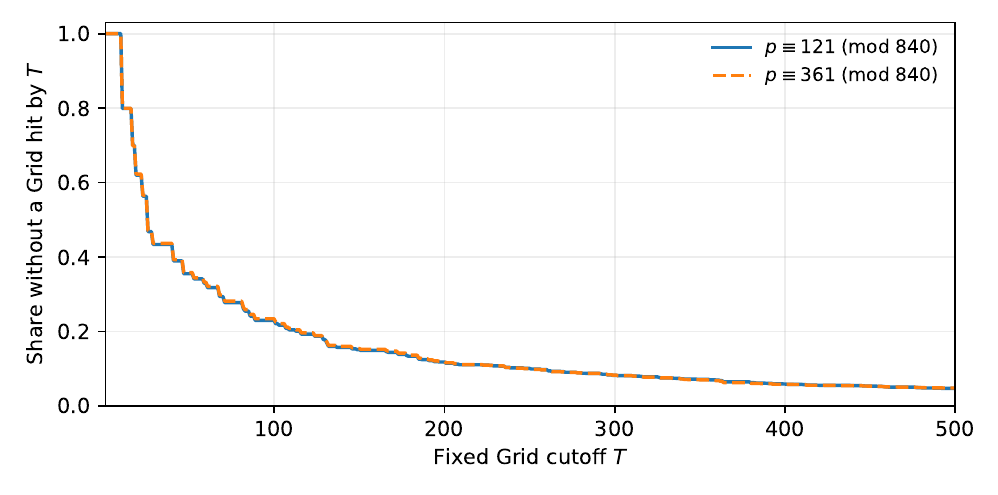}
 \caption{Observed non-coverage for primes in inverse classes
 $121$ and $361$, for $10^7\le p<10^8$.}
 \label{fig:curves}
\end{figure}

The counts were obtained by enumerating primes in $[10^7,10^8)$,
filtering by residue modulo $840$, and evaluating the unreduced
condition~\eqref{eq:grid} for each $1\le t\le T$.
No program or data file is part of the present manuscript.
The differences at finite $X$ do not give uniform error bounds
for cutoffs growing with $X$.

\section{Limits of multiplicative inheritance}

For a composite $n=uv$, an Erd\H{o}s--Straus representation
for $u$ lifts to one for $n$ by multiplying the denominators
by $v$.  This does not, in general, produce a \emph{strict}
Grid hit of $n$: the inherited FCT may require reduction.
Lemma~\ref{lem:coprime} makes the obstruction explicit.

For example,
\begin{equation*}
 \begin{aligned}
  2641&=19\cdot139\equiv121\pmod{840},\\
  \frac{2641}{3336}&=\frac{19}{24}
      =\left\lceil1,5,5\right\rceil,\\
  \gcd(2641,3336)&=139,\\
  (-2641)\bmod3336&=695,\qquad695\nmid3337.
 \end{aligned}
\end{equation*}
An exact search through $t\le1320$ finds no strict Grid
hit for $2641$, although the integer has an ordinary
Erd\H{o}s--Straus representation.
The equality of rational continued fractions after reduction
does not transfer the unreduced Grid-divisibility condition.
For instance, $2641$ is invertible modulo $M_{18}=49\,008\,960$,
with inverse $6365041$ in class $361$.
Both have no strict hit for $t\le18$, as required by the finite
inversion theorem.  The latter has a hit at $t=339$, outside
this shared cutoff; $2641$ is no longer invertible modulo $M_T$
when $T\ge19$.  Thus the bijection does not match global
coverage of fixed integers.

The inversion theorem preserves \emph{existing} strict hits,
not arbitrary representations inherited from factors.
Even a hypothetical proof of Grid coverage for all
non-square integers in class $121$ would not by itself
show that each certificate occurs within the transfer cutoff
attached to an inverse prime in class $361$.
Thus the four inversion orbits give four statistical classes
at each fixed cutoff, not an unconditional reduction of six
prime-coverage problems to four.

\section{Conclusion}
Strict Grid inversion preserves finite hit patterns for each
unit modulo $M_T$.  The Mordell classes form the four orbits
$\{1\}$, $\{169\}$, $\{121,361\}$, and $\{289,529\}$.
For each fixed cutoff, the two nontrivial pairs have identical
limiting prime-pattern frequencies.  This statistical equivalence
does not imply universal Grid coverage in either class.

\section*{AI assistance and attribution}

This manuscript and the underlying research were developed
with extensive assistance from ChatGPT (OpenAI, GPT-6).
The AI participated substantively in the discussion and
formalization of the modular inversion argument, proof steps, Python code and data checks, literature-search
suggestions, mathematical notation, and the structure and
English wording of this manuscript.
Claude (Anthropic) was consulted, separately from
ChatGPT, for an independent check of the proofs and examples,
The mathematical direction, research questions, choices of
examples, and interpretation of the research program were
actively directed by the human author.
The AI systems are not listed as authors.
The human author retains responsibility for checking all
mathematical claims, computations, and references in the
submitted work.

\end{document}